\documentclass[12pt]{article}

\usepackage[dvipdfm, bookmarksopen=true]{hyperref}
\usepackage[centertags]{amsmath}
\usepackage{amsfonts}
\usepackage{amssymb}
\usepackage{amsthm}
\usepackage{pstricks}
\usepackage{cite}

\theoremstyle{plain}
\newtheorem{thm}{Theorem}[section]
\newtheorem{cor}[thm]{Corollary}
\newtheorem{lem}[thm]{Lemma}
\newtheorem{prop}[thm]{Proposition}

\theoremstyle{definition}

\newtheorem{defn}[thm]{Definition}
\newtheorem{exam}[thm]{Example}

\theoremstyle{remark}
%

\typeout{Substyle for letter-sized documents. Released 24 July 1992}


\setlength{\topmargin}{-2.2cm}
\setlength{\headheight}{1.5cm}
\setlength{\headsep}{0.3cm}
\setlength{\textheight}{9in}
\setlength{\oddsidemargin}{0cm}
\setlength{\evensidemargin}{0cm}
\setlength{\textwidth}{6.5in}

\newcommand{\beast}{\begin{eqnarray*}}
\newcommand{\eeast}{\end{eqnarray*}}

\title{{\bf Correction of a theorem on the symmetric group generated by transvections}
}

\author{Hau-wen Huang}

\date{}

\begin{document}
\maketitle

\begin{abstract}
Let $V$ denote a vector space over two-element field $\mathbb F_2$ with finite positive dimension and endowed with a symplectic form $B.$ Let ${\rm SL}(V)$ denote the special linear group of $V.$ Let $S$ denote a subset of $V.$ Define $Tv(S)$ as the subgroup of ${\rm SL}(V)$ generated by the transvections with direction $\alpha$ for all $\alpha\in S.$ Define $G(S)$ as the graph whose vertex set is $S$ and where $\alpha,\beta\in S$ are connected whenever $B(\alpha,\beta)=1.$ A well-known theorem states that under the assumption that $S$ spans $V,$ the following (i), (ii) are equivalent:
\begin{enumerate}
\item[(i)] $Tv(S)$ is isomorphic to a symmetric group.

\item[(ii)] $G(S)$ is a claw-free block graph.
\end{enumerate}
We give an example which shows that this theorem is not true. We give a modification of this theorem as follows. Assume that $S$ is a linearly independent set of $V$ and no element of $S$ is in the radical of $V.$ Then the above (i), (ii) are equivalent.
\end{abstract}



\section{A theorem on the symmetric group generated by transvections}
Throughout this note  let $V$ denote a vector space over two-element field $\mathbb F_2$ with finite positive dimension and endowed with a symplectic form $B.$ Let ${\rm rad}\hspace{0.5mm} V$ denote the radical of $V$ with respect to $B.$ Let ${\rm SL}(V)$ denote the
special linear group of $V.$ For $\alpha\in V$ define a linear
transformation $\tau_\alpha:V\to V$ by
$$
\tau_\alpha\beta=\beta+B(\beta,\alpha)\alpha \qquad \quad
\hbox{for all $\beta\in V.$}
$$
We call $\tau_\alpha$ the {\it transvection on $V$ with direction
$\alpha.$} Observe that $\tau_\alpha^2=1$ and so $\tau_\alpha\in {\rm SL}(V).$
For a subset $S$ of $V$
define $Tv(S)$ to be the subgroup of ${\rm SL}(V)$ generated by
the transvections $\tau_\alpha$ for all $\alpha\in S,$ and define
$G(S)$ to be the simple graph which has vertex set $S$ and an edge
between vertices $\alpha$ and $\beta$ if and only if
$B(\alpha,\beta)=1.$

Let $G$ denote a simple graph. A {\it cut-vertex} of $G$ is a vertex whose deletion increases the number of components. A {\it block} of $G$ is a maximal connected subgraph of $G$ that has no cut-vertex. A {\it block graph} is a simple connected graph in which every block is a complete graph. A {\it claw} is a tree with one internal vertex and three leaves. A simple graph is said to be {\it claw-free} if it does not contain a claw as an induced subgraph.

Let $S$ denote a subset of $V.$ Let $C$ denote the set consisting of all cut-vertices of $G(S).$ We now view $G(S)$ as a 1-dimensional complex. Let $G_1,G_2,\ldots,G_k$ denote the components of $G(S)\setminus C.$ For each $1\leq i\leq k$ define $G_i^\ast$ to be the closure of $G_i$ in $G(S)$. Let $H$ denote the graph with $G_1^\ast,G_2^\ast,\ldots, G_k^\ast$ as vertices and an edge between $G_i^\ast$ and $G_j^\ast$ if $G_i^\ast\cap G_j^\ast$ is nonempty.  \cite[Theorem~3.1]{huris:85} states that under the assumption that $S$ spans $V,$ the group $Tv(S)$ is isomorphic to a symmetric group if and only if the following
  (i)--(iv) hold:
\begin{enumerate}
\item[(i)] $G(S)$ is connected;

\item[(ii)] for each $\alpha\in S$ the graph $G(S\setminus\{\alpha\})$ contains at most two components;

\item[(iii)] for each $1\leq i\leq k$ the graph $G_i^\ast$ is a complete graph;

\item[(iv)] $H$ is a tree.
\end{enumerate}
We remark that $G_1^\ast,G_2^\ast,\ldots,G_k^\ast$ are the blocks of $G(S).$ Therefore $G(S)$ is a claw-free block graph if and only if (i)--(iii) hold. Condition (ii) implies that $H$ is acyclic and therefore (i), (ii) imply (iv).  We can state \cite[Theorem 3.1]{huris:85} as follows.

\begin{thm}\label{thm1.1}{\rm \cite[Theorem 3.1]{huris:85}.}
Assume that $S$ spans $V.$ Then $Tv(S)$ is isomorphic to a symmetric group if and only if $G(S)$ is a claw-free block graph.
\end{thm}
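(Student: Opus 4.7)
The plan is to combine a graph-theoretic reduction with a generators-and-relations analysis, proving both directions of the biconditional.

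\textbf{Step 1 (graph-theoretic preparation).} First I would verify the equivalence already noted in the preamble: that $G(S)$ is a claw-free block graph precisely when (i)--(iii) hold, and that (i), (ii) together imply (iv). In a connected graph whose blocks are all complete (condition~(iii)), an induced claw can only arise at a vertex lying in three or more distinct blocks, i.e.\ at a cut-vertex whose deletion produces at least three components; thus (ii) is exactly the claw-freeness constraint. The same inequality forbids cycles in the block-cut structure $H$, yielding (iv). This reduces the theorem to proving that $Tv(S)$ is a symmetric group iff (i)--(iv) hold.

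\textbf{Step 2 (sufficiency).} Assume (i)--(iv). The plan is to induct on the number $k$ of blocks. For $k=1$ the graph $G(S)$ is a complete graph on $n$ vertices, and a direct computation yields $\tau_\alpha\tau_\beta\tau_\alpha=\tau_{\alpha+\beta}$ whenever $B(\alpha,\beta)=1$, so the generators satisfy the braid-type relations presenting $S_{n+1}$; one then checks no extra relations appear, so $Tv(S)\cong S_{n+1}$. For the inductive step, pick a leaf $G_i^\ast$ of the tree $H$, let $c$ be its unique cut-vertex, and set $S'=G_i^\ast$, $S''=(S\setminus G_i^\ast)\cup\{c\}$. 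By induction $Tv(S'')$ is a symmetric group, $Tv(S')$ is a symmetric group by the base case, and the two subgroups share only the single generator $\tau_c$, which sits as a transposition in each. A standard amalgamated-product argument should then identify $Tv(S)$ with a symmetric group of the appropriate size.

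\textbf{Step 3 (necessity).} Assume $Tv(S)\cong S_N$. Each $\tau_\alpha$ is an involution and is therefore conjugate in $S_N$ to a product of disjoint transpositions. The crucial point is to show that every $\tau_\alpha$ actually corresponds to a \emph{single} transposition. Once that is established, the dictionary $B(\alpha,\beta)=0 \Leftrightarrow$ the corresponding transpositions commute $\Leftrightarrow$ they are disjoint, and $B(\alpha,\beta)=1 \Leftrightarrow (\tau_\alpha\tau_\beta)^3=1 \Leftrightarrow$ the transpositions share exactly one symbol, exhibits $G(S)$ as the intersection graph of a family of transpositions on $\{1,\ldots,N\}$. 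Such intersection graphs are well-known to be claw-free block graphs, and the spanning hypothesis forces connectedness, giving (i).

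\textbf{Main obstacle.} The delicate step is Step~3: justifying that each $\tau_\alpha$ becomes a single transposition rather than a product of several disjoint ones. In Step~2 there is an analogous subtlety: the identity $\tau_\alpha\tau_\beta\tau_\alpha=\tau_{\alpha+\beta}$ produces a ``new'' transvection whose direction $\alpha+\beta$ need not lie outside $S$, and nothing in the spanning hypothesis alone prevents hidden linear relations among the elements of $S$ from producing unexpected coincidences $\tau_\alpha=\tau_\beta$ or from letting some $\tau_\alpha$ act as the identity (for instance if $\alpha\in{\rm rad}\hspace{0.5mm} V$). I expect any failure of the theorem as stated to surface precisely here: spanning is too weak an assumption, and linear independence together with control over the radical is what is truly needed.
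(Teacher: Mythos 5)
There is a fundamental problem: the statement you are trying to prove is false, and the whole point of this paper is to demonstrate that. The paper does not prove Theorem~\ref{thm1.1}; Section~2 exhibits a counterexample to its necessity direction, namely $S=\{\alpha_1,\ldots,\alpha_n,\alpha_1+\alpha_2\}$ where $G(I)$ is a path on the basis $I=\{\alpha_1,\ldots,\alpha_n\}$. There $Tv(S)=Tv(I)$ is a symmetric group, yet $G(S)$ contains the non-complete block on $\{\alpha_1,\alpha_2,\alpha_3,\alpha_1+\alpha_2\}$. Your closing paragraph correctly senses that spanning is too weak and that linear independence plus avoidance of the radical is the right hypothesis (this is exactly the corrected Theorem~\ref{thm3.1}), but you then mislocate where your argument breaks. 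The failure in Step~3 is not that some $\tau_\alpha$ could be a product of several disjoint transpositions --- in the counterexample every generator, including $\tau_{\alpha_1+\alpha_2}=\tau_{\alpha_1}\tau_{\alpha_2}\tau_{\alpha_1}$, is a single transposition (it is $(1\,3)$ if $\tau_{\alpha_1},\tau_{\alpha_2}$ are $(1\,2),(2\,3)$). The fatal step is your final claim that intersection graphs of families of distinct transpositions are ``well-known to be claw-free block graphs.'' A family of distinct transpositions on $\{1,\ldots,N\}$ is a set of edges of a graph $\Gamma$ on $N$ vertices, and its intersection graph is the line graph of $\Gamma$. Line graphs of arbitrary graphs are claw-free, but they are claw-free \emph{block} graphs only when $\Gamma$ is a tree (Lemma~\ref{lem3.2}). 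In the counterexample $\Gamma$ contains the triangle $(1\,2),(2\,3),(1\,3)$ together with the pendant edge $(3\,4)$, and its line graph has a $2$-connected non-complete block. Nothing in the spanning hypothesis rules this out.

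Your Step~2 (sufficiency) is sound and is essentially what the paper keeps as Lemma~\ref{lem3.3}: the original sufficiency proof never uses the spanning hypothesis, and the induction over the blocks of $G(S)$ via the relation $\tau_\alpha\tau_\beta\tau_\alpha=\tau_{\alpha+\beta}$ when $B(\alpha,\beta)=1$ is the standard route. But for the necessity direction no repair of Step~3 is possible under the stated hypotheses; the correct statement requires $S$ to be linearly independent with no element in ${\rm rad}\hspace{0.5mm}V$, and the paper's proof of that version proceeds quite differently, by moving $S$ within its $\mathcal I$-equivalence class to a set $S'$ with $G(S')$ a path (Lemma~\ref{lem3.1}) and showing that the property of being a line graph of a tree is preserved under the elementary moves $\beta\mapsto\tau_\alpha\beta$ restricted to linearly independent sets (Lemma~\ref{lem3.4}).
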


\section{A counterexample to the necessity of Theorem~\ref{thm1.1}}
In this section we show a counterexample to the necessity of Theorem~\ref{thm1.1}. We begin by recalling some background material from \cite{brohum:86-1,brohum:86-2}.

\begin{defn}\label{defn2.1}
\cite[Section 3]{brohum:86-1}.
Define a binary relation $\mathcal{T}_0$ on the power set of $V$ as follows. For any two
$S,S'\subseteq V$ we say that $S$ is {\it $\mathcal T_0$-related to} $S'$ whenever there exist $\alpha,\beta\in S$ such that $S'$ is obtained from $S$ by changing $\beta$ to $\tau_\alpha \beta.$
\end{defn}


\begin{defn}\label{defn2.2}
\cite[Section 3]{brohum:86-1}.
Define $\mathcal T$ to be the equivalence relation on the power set of $V$ generated by $\mathcal{T}_0.$
\end{defn}

\begin{lem}\label{lem2.1}
{\rm\cite[Corollary 11.2]{brohum:86-2}.}
Assume that $S$ is a subset of $V$ and no element of $S$ is in ${\rm rad}\hspace{0.5mm}V.$ Let the equivalence relation $\mathcal T$ be as in Definition~\ref{defn2.2}. Then $Tv(S)$ is isomorphic to a symmetric group if and only if there exists $S'$ in the $\mathcal T$-equivalence class of $S$ for which $G(S')$ is a path.
\end{lem}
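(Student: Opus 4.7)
My plan is first to establish that both sides of the claimed equivalence depend only on the $\mathcal T$-class of $S$, and then to treat the two implications separately. For the invariance of $Tv(S)$: if $S'$ is obtained from $S$ by replacing $\beta$ with $\tau_\alpha\beta$, then because $\tau_\alpha$ preserves $B$, the conjugation identity gives $\tau_\alpha\tau_\beta\tau_\alpha = \tau_{\tau_\alpha\beta}$, so $\tau_{\tau_\alpha\beta} \in Tv(S)$ and symmetrically $\tau_\beta \in Tv(S')$, yielding $Tv(S) = Tv(S')$. For the invariance of the hypothesis ``no element in $\mathrm{rad}\,V$'': when $B(\alpha,\beta) = 0$ we have $\tau_\alpha\beta = \beta$ and nothing changes, while when $B(\alpha,\beta) = 1$ we compute $B(\tau_\alpha\beta,\alpha) = B(\alpha+\beta,\alpha) = 1$, ruling out $\tau_\alpha\beta \in \mathrm{rad}\,V$. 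Thus both sides descend to $\mathcal T$-classes.

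For the ``if'' direction, suppose $G(S')$ is a path with vertex sequence $v_1, v_2, \ldots, v_n$. A direct computation shows the generators $\tau_{v_i}$ satisfy the Coxeter relations of $S_{n+1}$: each is an involution; $(\tau_{v_i}\tau_{v_{i+1}})^3 = 1$, because on the nondegenerate plane $\mathrm{span}(v_i, v_{i+1})$ the product $\tau_{v_i}\tau_{v_{i+1}}$ cyclically permutes the three nonzero vectors $v_i, v_{i+1}, v_i+v_{i+1}$ and is trivial on the orthogonal complement; and $\tau_{v_i}\tau_{v_j} = \tau_{v_j}\tau_{v_i}$ for $|i-j| \geq 2$ since $B(v_i, v_j) = 0$. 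These relations give a surjection $S_{n+1} \twoheadrightarrow Tv(S')$. To promote this to an isomorphism, I would prove linear independence of $v_1, \ldots, v_n$ (by induction, noting that any relation $\sum c_i v_i = 0$ paired against $v_{k\pm 1}$ contradicts the path's adjacency pattern) and then identify the action of $Tv(S')$ on $\mathrm{span}(v_1, \ldots, v_n)$ with the natural faithful action of $S_{n+1}$ on the zero-sum hyperplane of its permutation module.

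The ``only if'' direction is the harder one and is the main obstacle. Given $Tv(S) \cong S_m$, the plan is to normalize $G(S)$ by iterated $\mathcal T_0$-moves: identify $m-1$ involutions of $Tv(S)$ matching the adjacent transpositions of $S_m$, realize each as a transvection $\tau_{v_i}$ for some $v_i$ reachable from $S$ by $\mathcal T$-moves, and verify that the resulting tuple $(v_1, \ldots, v_{m-1})$ forms a path in $G$. The delicate step is the realization: one must show that the abstract involutions corresponding to simple transpositions can be pulled back to transvections along vectors that all lie in a common $\mathcal T$-class with $S$. This demands fine-grained control of the $\mathcal T$-orbit structure and is precisely what \cite[Corollary 11.2]{brohum:86-2} supplies through its classification of $\mathcal T$-classes in the radical-free setting; the proof plan is therefore to invoke that classification once the invariance and sufficiency steps above are in place.
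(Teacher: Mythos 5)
The paper gives no proof of this lemma: it is quoted verbatim from Brown and Humphries \cite[Corollary 11.2]{brohum:86-2}, so the only comparison available is between your attempt and that citation. Your preliminary reductions are sound and worth having: the conjugation identity $\tau_\alpha\tau_\beta\tau_\alpha=\tau_{\tau_\alpha\beta}$ (valid because $\tau_\alpha$ preserves $B$) shows that $Tv(S)$ depends only on the $\mathcal T$-class of $S$, and the computation $B(\tau_\alpha\beta,\alpha)=B(\alpha+\beta,\alpha)=1$ when $B(\alpha,\beta)=1$ shows the radical-avoidance hypothesis is likewise $\mathcal T$-invariant. For the ``if'' direction, the Coxeter-relation check giving a surjection $S_{n+1}\twoheadrightarrow Tv(S')$ is correct, and it already finishes that direction, since every quotient of a symmetric group is again a symmetric group ($S_m$, $S_2$, $S_1$, or $S_3$ when $m=4$). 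Your proposed strengthening to an isomorphism with $S_{n+1}$ via linear independence of $v_1,\dots,v_n$ is, however, false as stated: in a $4$-dimensional nondegenerate space with basis $v_1,\dots,v_4$ realizing the path form, the vector $v_5=v_1+v_3$ satisfies $B(v_5,v_i)=0$ for $i\le 3$ and $B(v_5,v_4)=1$, so $v_1,\dots,v_5$ is a $5$-vertex path whose vertices are linearly dependent and none of which lies in the radical. (The group generated is still $Sp_4(\mathbb F_2)\cong S_6$, so the lemma survives, but your independence argument does not.)

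The genuine gap is the ``only if'' direction, which is the entire content of the lemma. Your plan --- realize the simple transpositions of $S_m\cong Tv(S)$ as transvections along vectors reachable from $S$ by $\mathcal T$-moves --- is not carried out; you explicitly defer the delicate step to the classification supplied by \cite[Corollary 11.2]{brohum:86-2}, that is, to the very statement being proved. As a self-contained argument this is circular, and as a citation it adds nothing beyond what the paper already does by attributing the lemma wholesale to Brown--Humphries. Either cite the reference for the full statement (as the paper does and as is appropriate here), or actually supply the hard direction, which requires the orbit analysis of $\mathcal T$-classes developed in \cite{brohum:86-1,brohum:86-2} and cannot be dispatched in a paragraph.
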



\begin{exam}
Let $V$ denote a vector space over $\mathbb{F}_2$ with dimension $n\geq 3.$ Let $I=\{\alpha_1,\alpha_2,\ldots,\alpha_n\}$ denote a basis of $V.$ Define a symplectic form $B:V\times V\to \mathbb{F}_2$ by
\begin{equation*}
B(\alpha_i,\alpha_j)=\left\{
\begin{array}{ll}
1 \qquad &\hbox{if $|i-j|=1$,}\\
0 \qquad &\hbox{if $|i-j|\neq 1$}
\end{array}
\right.
\qquad \quad (1\leq i,j\leq n).
\end{equation*}
Let $S=I\cup\{\alpha_1+\alpha_2\},$ which spans $V.$ The set $I$ can be obtained from $S$ by changing $\alpha_1+\alpha_2$ to $\tau_{\alpha_1}(\alpha_1+\alpha_2)=\alpha_2.$ Therefore $S$ is $\mathcal T_0$-related to $I$. The graph $G(I)$ is a path.
By Lemma~\ref{lem2.1} the group $Tv(S)$ is isomorphic to a symmetric group. We draw $G(S)$ as follows.

\psset{unit=1.6cm}
\begin{pspicture}(0,0)(9,1.6)
\psset{linewidth=0.8pt}

\psline{o-o}(2.8,0.4)(3.5,1)
\psline{o-o}(3.5,1)(4.2,0.4)

\psline{o-o}(3.5,1)(3.5,0.4)

\psline{o-o}(2.8,0.4)(3.5,0.4)
\psline{o-o}(3.5,0.4)(4.2,0.4)
\psline{o-o}(4.2,0.4)(4.9,0.4)
\psline{o-o}(6.1,0.4)(6.8,0.4)

\rput(3.5,1.2){{\scriptsize $\alpha_1+\alpha_2$}}
\rput(2.8,0.2){{\scriptsize $\alpha_1$}}
\rput(3.5,0.2){{\scriptsize $\alpha_2$}}
\rput(4.2,0.2){{\scriptsize $\alpha_3$}}
\rput(4.9,0.2){{\scriptsize $\alpha_4$}}
\rput(6.1,0.2){{\scriptsize $\alpha_{n-1}$}}
\rput(6.8,0.2){{\scriptsize $\alpha_n$}}

\psdots [dotsize=2.5pt]
(5.2,0.4)(5.5,0.4)(5.8,0.4)

\end{pspicture}

\noindent The block of $G(S)$ with vertex set  $\{\alpha_1+\alpha_2,\alpha_1,\alpha_2,\alpha_3\}$ is not complete. Therefore $G(S)$ is not a block graph. We get a contradiction to the necessity of Theorem~\ref{thm1.1}.
\end{exam}

\section{A modification of Theorem~\ref{thm1.1}}

In Section 2 we showed Theorem~\ref{thm1.1} to be incorrect by example. In this section we give a replacement theorem as follows.

\begin{thm}\label{thm3.1}
Assume that $S$ is a linearly independent set of $V$ and no element of $S$ is in ${\rm rad}\hspace{0.5mm}V.$
Then $Tv(S)$ is isomorphic to a symmetric group if and only if $G(S)$ is a claw-free block graph.

\end{thm}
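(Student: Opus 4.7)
The plan is to derive Theorem~\ref{thm3.1} from Lemma~\ref{lem2.1} by proving the following equivalence under the theorem's hypotheses: $G(S)$ is a claw-free block graph if and only if $S$ is $\mathcal T$-equivalent to some $S'$ with $G(S')$ a path. Since a path is itself a claw-free block graph, combining this equivalence with Lemma~\ref{lem2.1} yields the theorem.

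For the direction ``path-$S'$ exists $\Rightarrow$ $G(S)$ is claw-free block graph,'' I would establish that the claw-free block graph property is preserved under any single $\mathcal T_0$-move applied within the class of linearly independent subsets with no element in $\operatorname{rad} V$. The hypotheses themselves persist under $\mathcal T_0$: if $S' = (S\setminus\{\beta\})\cup\{\alpha+\beta\}$ with $B(\alpha,\beta)=1$, then linear independence of $S$ forces $\alpha+\beta\notin S$, so $|S'|=|S|$ and $S'$ is again linearly independent, while $B(\alpha+\beta,\alpha)=1$ ensures $\alpha+\beta\notin\operatorname{rad} V$. For the graph property, the edge $\alpha\sim\beta$ places them in a common block $B_0$, and claw-freeness permits each to lie in at most one additional block $A_1\ni\alpha$, $B_1\ni\beta$ (either possibly absent). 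A vertex $\gamma\in S\setminus\{\alpha,\beta\}$ adjacent to both would produce a triangle forced into $B_0$, so such $\gamma$ lies in $B_0\setminus\{\alpha,\beta\}$. Using the identity $B(\alpha+\beta,\gamma)=B(\alpha,\gamma)+B(\beta,\gamma)$, the neighborhood of $\alpha+\beta$ in $G(S')$ becomes $\{\alpha\}\cup(A_1\setminus\{\alpha\})\cup(B_1\setminus\{\beta\})$. From this one reads off the blocks of $G(S')$: $B_0\setminus\{\beta\}$ when $|B_0|\geq 3$; the clique $A_1\cup\{\alpha+\beta\}$ when $A_1$ exists (or $\{\alpha,\alpha+\beta\}$ otherwise); the clique $(B_1\setminus\{\beta\})\cup\{\alpha+\beta\}$ when $B_1$ exists; and every other block of $G(S)$ unchanged. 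All these are complete and every vertex of $S'$ lies in at most two of them, so $G(S')$ is a claw-free block graph. Iterating backward along the $\mathcal T$-chain from the path-$S'$ proves $G(S)$ is a claw-free block graph.

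For the reverse direction, I would construct a $\mathcal T$-chain to a path-$S'$ by induction on $c(S):=\sum_{|B|\geq 3}(|B|-2)$, summed over blocks. When $c(S)=0$, every block of $G(S)$ is an edge, so connectedness makes $G(S)$ a tree, and claw-freeness bounds its maximum degree by two, making it a path. When $c(S)\geq 1$, select a block $B_0$ of size at least three. If $B_0$ contains a non-cut vertex $\alpha$, applying the move $\beta\mapsto\alpha+\beta$ for any $\beta\in B_0\setminus\{\alpha\}$ falls into the case $A_1$ absent, so $B_0$ shrinks by one while no other block of size at least three enlarges, strictly dropping $c$. If every vertex of $B_0$ is a cut vertex, a preparatory move $\beta\mapsto\alpha+\beta$ with $\alpha,\beta\in B_0$ merges $A_1$ into the new clique $A_1\cup\{\alpha+\beta\}$; choosing $\alpha$ so that its second block $A_1$ is as close as possible to a leaf of the block tree, the procedure either produces a new large block admitting a non-cut vertex (whereupon the previous reduction applies) or strictly decreases the block-tree distance from the large block to a leaf. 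Together with $c(S)$, this distance forms a lexicographic complexity that strictly decreases per move, so the induction terminates at $c(S')=0$.

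The main obstacle is the detailed case analysis in the preservation direction: verifying, in each of the four configurations of $(A_1,B_1)$ and separately for $|B_0|=2$ versus $|B_0|\geq 3$, that the listed cliques are precisely the blocks of $G(S')$ (maximal $2$-connected subgraphs) and that no vertex of $S'$ lies in three or more of them. The linear independence hypothesis enters decisively here through the equality $|S'|=|S|$, which ensures $\alpha+\beta$ is a genuinely new vertex and not the relabeling of a pre-existing element of $S$; without this, the counterexample of Section~2 shows the claw-free block graph property can fail under a $\mathcal T_0$-move precisely because $\alpha+\beta$ coincides with an existing element of $S$.
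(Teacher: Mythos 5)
There is a genuine gap in your necessity argument, and it sits exactly at the point the paper is most careful about. You invoke Lemma~\ref{lem2.1} to obtain a set $S'$ in the $\mathcal T$-equivalence class of $S$ with $G(S')$ a path, and then propagate the claw-free block graph property backward along a $\mathcal T$-chain using a preservation lemma that is only valid for $\mathcal T_0$-moves \emph{between linearly independent sets}. But $\mathcal T$ is the equivalence relation generated by $\mathcal T_0$ on \emph{all} subsets of $V$, so the chain joining $S$ to $S'$ may pass through linearly dependent intermediates: a ``backward'' step can enlarge a linearly independent set $T_i$ to a linearly dependent $T_{i+1}$ with $|T_{i+1}|=|T_i|+1$ (precisely the situation of the Section~2 example, where the path $I$ is $\mathcal T_0$-related to the dependent set $I\cup\{\alpha_1+\alpha_2\}$, whose graph is \emph{not} a block graph). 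At such a step your preservation argument fails, as you yourself observe in your last sentence. You therefore cannot iterate along the chain that Lemma~\ref{lem2.1} hands you. Closing this gap requires either re-examining the Brown--Humphries proof to see that it produces a chain staying inside the linearly independent sets --- this is exactly the content of the paper's Lemma~\ref{lem3.1}, stated for the restricted relation $\mathcal I$ of Definition~\ref{defn3.2} --- or proving separately that $\mathcal T$-equivalent linearly independent sets are $\mathcal I$-equivalent, which is not obvious and which you do not address.

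Two further remarks on the comparison. For the preservation step itself, your direct block-by-block analysis is workable but heavier than the paper's Lemma~\ref{lem3.4}, which uses Harary's characterization (Lemma~\ref{lem3.2}) of claw-free block graphs as line graphs of trees: a single $\mathcal I_0$-move just detaches the edge $\beta$ from the vertex $u$ of the tree and reattaches it at $w$, so the result is again the line graph of a tree, with no case analysis on the configuration of blocks. For sufficiency, your induction on the complexity $c(S)$ with a secondary block-tree distance is left quite sketchy (the case where every vertex of the large block is a cut-vertex is only gestured at), and it is unnecessary: the paper's Lemma~\ref{lem3.3} gives sufficiency directly, since the original proof that a claw-free block graph forces $Tv(S)$ to be symmetric never uses the spanning hypothesis.
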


The original proof of the sufficiency of Theorem~\ref{thm1.1} does not use the assumption
that $S$ spans $V.$ We actually get the following result.

\begin{lem}\label{lem3.3}
Let $S$ denote a subset of $V.$ 
If $G(S)$ is a claw-free block graph of order $n$ then $Tv(S)$ is isomorphic to the symmetric group on $n+1$ letters.
\end{lem}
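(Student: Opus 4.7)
The plan is to revisit the sufficiency half of the proof of Theorem \ref{thm1.1} in \cite{huris:85} and to confirm, as the paragraph preceding the lemma asserts, that it establishes $Tv(S)\cong S_{n+1}$ without any use of the hypothesis that $S$ spans $V$. Conceptually the argument splits into two parts, and I would present the plan in this form.

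First, a graph-theoretic reduction: starting from a claw-free block graph $G(S)$, one exhibits a sequence of $\mathcal T_0$-transformations (in the sense of Definition \ref{defn2.1}) producing some $S'$ in the $\mathcal T$-equivalence class of $S$ such that $G(S')$ is a path on $n$ vertices. The claw-freeness of a block graph is equivalent to the condition that every cut-vertex lies in at most two blocks, so $G(S)$ is a chain of complete blocks glued at cut-vertices and can be combed out into a path by transvecting within each block. This reduction is intrinsic to $G(S)$ and does not involve $V$.

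Second, the path case: once $G(S')$ is a path with successive edges $\alpha_i\alpha_{i+1}$ for $1\leq i\leq n-1$, one verifies that the generators $\tau_{\alpha_i}$ satisfy the Coxeter presentation of $S_{n+1}$ --- involutivity from $\tau_\alpha^2=1$, the braid relation $(\tau_{\alpha_i}\tau_{\alpha_{i+1}})^3=1$ from $B(\alpha_i,\alpha_{i+1})=1$, and commutation for $|i-j|\geq 2$ from $B(\alpha_i,\alpha_j)=0$ --- yielding a surjection $S_{n+1}\twoheadrightarrow Tv(S')$. One then builds a transitive action of $Tv(S')$ on a set of size $n+1$ (a canonical orbit inside $V$) to force this surjection to be an isomorphism. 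Each verification is a direct computation from the values of $B$ on $S'\times S'$, so nothing appeals to $S'$ spanning $V$; and since $\mathcal T$-related subsets generate the same subgroup of ${\rm SL}(V)$, we get $Tv(S)=Tv(S')\cong S_{n+1}$.

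The main obstacle I expect is pure bookkeeping: one must comb through the argument in \cite{huris:85} and confirm that every appeal to the ambient space $V$ is innocuous. Should any step quietly require spanning, the natural remedy is to pass to the subspace $\langle S\rangle$ equipped with the restricted symplectic form, which leaves $G(S)$ unchanged and is harmless in the relevant generality; this reinstates the spanning hypothesis at no cost, after which the original argument applies verbatim and gives $Tv(S)\cong S_{n+1}$.
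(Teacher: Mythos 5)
Your primary plan coincides with what the paper actually does: the paper offers no independent proof of Lemma~\ref{lem3.3}, but simply observes that the sufficiency argument of \cite[Theorem~3.1]{huris:85} (combing $G(S)$ into a path, or equivalently realizing $G(S)$ as the line graph of a tree and matching the Coxeter-type relations among the $\tau_\alpha$) never invokes the hypothesis that $S$ spans $V$. So the first two parts of your outline are a reasonable reconstruction of the intended argument and are consistent with the paper.

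However, your fallback remedy --- ``pass to the subspace $\langle S\rangle$ equipped with the restricted symplectic form, which \ldots is harmless'' --- contains a genuine error, and since it is offered as the safety net for the whole proof it is worth flagging. The restricted form on $W=\langle S\rangle$ may be degenerate even when no element of $S$ lies in ${\rm rad}\hspace{0.5mm}V$, and the restriction homomorphism $Tv_V(S)\to Tv_W(S)$ (sending $g$ to $g|_W$) need not be injective: its kernel consists of isometries that are trivial on $W$ but can move vectors of $V\setminus W$ by elements of ${\rm rad}\hspace{0.5mm}W$. Concretely, take $S=\{\alpha\}$ with $\alpha\notin{\rm rad}\hspace{0.5mm}V$. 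Then $G(S)$ is the claw-free block graph $K_1$ and $Tv_V(S)=\{1,\tau_\alpha\}\cong S_2$, as the lemma asserts; but $B$ restricted to $W=\langle\alpha\rangle$ is identically zero (the form is alternating), so $\tau_\alpha|_W=1$ and $Tv_W(S)$ is trivial. Thus restricting to $\langle S\rangle$ changes the group, and the lemma's conclusion is exactly the kind of statement that is sensitive to this degeneracy. If you want a safety net, you must instead argue directly that the relevant computations (the relations among the $\tau_\alpha$ and the construction of the orbit of size $n+1$) take place inside $V$ and depend only on $B|_{S\times S}$ --- which is in effect what the first two parts of your plan already do.
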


We will not prove the necessity of Theorem~\ref{thm3.1} by revising the proof of the necessity of Theorem~\ref{thm1.1}. Instead we will provide a short proof. To do this we define two binary relations on the set of all linearly independent sets of $V$ and need three lemmas.

\begin{defn}\label{defn3.1}
Define $\mathcal{I}_0$ to be the restriction of $\mathcal{T}_0$ to the set of all linearly independent sets of $V.$
\end{defn}

Observe that the binary relation $\mathcal I_0$ from Definition~\ref{defn3.1} is symmetric.

\begin{defn}\label{defn3.2}
Define $\mathcal{I}$ to be the equivalence relation on the set of all linearly independent sets of $V$ generated by $\mathcal{I}_0.$
\end{defn}


The original proof of Lemma~\ref{lem2.1} works for the following lemma.

\begin{lem}\label{lem3.1}
Assume that $S$ is a linearly independent set of $V$ and no element of $S$ is in ${\rm rad}\hspace{0.5mm}V.$ Let the equivalence relation $\mathcal I$ be as in Definition~\ref{defn3.2}. Then $Tv(S)$ is isomorphic to a symmetric group if and only if there exists $S'$ in the $\mathcal I$-equivalence class of $S$ for which $G(S')$ is a path.
\end{lem}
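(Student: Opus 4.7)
My plan is to reduce Lemma~\ref{lem3.1} to Lemma~\ref{lem2.1} by establishing that, for any linearly independent $S\subseteq V$, the $\mathcal{I}$-equivalence class of $S$ coincides with the $\mathcal{T}$-equivalence class of $S$. Once this equality is in hand, Lemma~\ref{lem2.1} delivers Lemma~\ref{lem3.1} verbatim, since the clause ``there exists $S'$ in the relevant class with $G(S')$ a path'' is the same in both statements.

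The heart of the matter is a single observation that I would establish first: every $\mathcal{T}_0$-neighbour of a linearly independent set is again linearly independent. Take $S$ linearly independent and $S'$ obtained by replacing some $\beta\in S$ with $\tau_\alpha\beta$, $\alpha\in S$. If $B(\alpha,\beta)=0$ then $S'=S$ and there is nothing to check. Otherwise $\tau_\alpha\beta=\alpha+\beta$; I would rule out the bad possibility $\alpha+\beta\in S\setminus\{\beta\}$ by noting that such an equality $\alpha+\beta=\gamma$ forces $\alpha+\beta+\gamma=0$ with $\alpha,\beta,\gamma$ pairwise distinct members of $S$ (distinctness can fail only if $\alpha=0$ or $\beta=0$, both impossible in a linearly independent set), contradicting linear independence. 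Hence $S'=(S\setminus\{\beta\})\cup\{\alpha+\beta\}$ arises from $S$ by an elementary basis change and remains linearly independent.

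Iterating this observation along a $\mathcal{T}$-chain emanating from $S$ shows that every member of the $\mathcal{T}$-class of $S$ is reached through linearly independent intermediaries, so it lies in the $\mathcal{I}$-class of $S$; the reverse containment $\mathcal{I}\subseteq\mathcal{T}$ is immediate from $\mathcal{I}_0\subseteq\mathcal{T}_0$. I would finish by invoking Lemma~\ref{lem2.1}, after remarking that the ``no element of $S$ lies in $\mathrm{rad}\,V$'' hypothesis also propagates along $\mathcal{T}_0$-chains: transvections are $B$-isometries and therefore preserve $\mathrm{rad}\,V$ setwise, so $\tau_\alpha\beta\notin\mathrm{rad}\,V$ whenever $\beta\notin\mathrm{rad}\,V$. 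I do not expect a genuine obstacle here; the only nontrivial check is the pairwise-distinctness verification above, and the remainder is bookkeeping on equivalence relations.
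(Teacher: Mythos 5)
Your reduction hinges on the claim that for linearly independent $S$ the $\mathcal T$-equivalence class and the $\mathcal I$-equivalence class coincide, and that claim is false. Your forward observation is correct: a $\mathcal T_0$-step \emph{applied to} a linearly independent set yields a linearly independent set. But $\mathcal T$ is the equivalence relation \emph{generated} by $\mathcal T_0$, so a $\mathcal T$-chain emanating from $S$ may also traverse $\mathcal T_0$-edges backwards, and an inverse $\mathcal T_0$-step can land on a linearly dependent set: a move that changes $\beta$ to $\tau_\alpha\beta$ can \emph{decrease} cardinality when $\tau_\alpha\beta$ already lies in $S\setminus\{\beta\}$, so its inverse increases cardinality. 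The paper's own Example in Section~2 exhibits exactly this: $S=I\cup\{\alpha_1+\alpha_2\}$ is linearly dependent, yet it is $\mathcal T_0$-related to the basis $I$ (change $\alpha_1+\alpha_2$ to $\tau_{\alpha_1}(\alpha_1+\alpha_2)=\alpha_2$), so the linearly dependent $S$ lies in the $\mathcal T$-class of the linearly independent $I$ but certainly not in its $\mathcal I$-class. Indeed, the entire point of the paper is that this enlargement of the $\mathcal T$-class is what breaks Theorem~\ref{thm1.1}.

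Consequently only the ``if'' direction of Lemma~\ref{lem3.1} follows from your reduction (via $\mathcal I\subseteq\mathcal T$ and Lemma~\ref{lem2.1}). For the ``only if'' direction, Lemma~\ref{lem2.1} hands you an $S'$ in the $\mathcal T$-class with $G(S')$ a path, but $S'$ may be linearly dependent (sets with path graphs need not be independent; see the construction in Section~4), or may be reachable from $S$ only through linearly dependent intermediaries, and you give no mechanism for replacing it by a member of the $\mathcal I$-class. Patching this would require showing that whenever some member of the $\mathcal T$-class has a path graph, some member of the $\mathcal I$-class does too --- a nontrivial statement you have not addressed. The paper avoids the issue entirely: it does not deduce Lemma~\ref{lem3.1} from the \emph{statement} of Lemma~\ref{lem2.1}, but observes that the Brown--Humphries \emph{proof} of that result runs unchanged inside the world of linearly independent sets, i.e., with $\mathcal I$ in place of $\mathcal T$ throughout.
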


To state the second lemma we recall the notion of the line graph of a simple graph.
Let $G$ denote a simple graph. The {\it line graph of $G$} is a
simple graph that has a vertex for each edge of $G,$ and two of
these vertices are adjacent whenever the corresponding edges in
$G$ have a common vertex.

\begin{lem}\label{lem3.2}
{\rm \cite[Theorem 8.5]{ha:72}.}
Let $G$ denote a simple graph. Then $G$ is a claw-free block graph if and only if $G$ is the line graph of a tree.
\end{lem}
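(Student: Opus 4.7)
The plan is to prove both directions directly without appealing to Beineke's nine-forbidden-subgraph characterization of line graphs, since the structural description in this setting is particularly transparent. For the forward direction, suppose $G = L(T)$ for a tree $T$. For each vertex $v$ of $T$, the set $K_v$ of edges of $T$ incident to $v$ forms a clique in $G$. To see $G$ is claw-free, note that any vertex $e$ of $G$ (an edge of $T$) has two endpoints $u,w$ in $T$, so the neighborhood of $e$ in $G$ is contained in $K_u \cup K_w$; by pigeonhole, any three neighbors of $e$ must contain two that lie in a common $K_u$ or $K_w$, hence are adjacent, ruling out an induced claw. To see every block of $G$ is a clique, I would argue that each block of $G$ is of the form $K_v$ for some vertex $v$ of $T$ of degree at least two: any cycle in $G$ gives rise to a closed sequence of edges of $T$ in which consecutive edges share an endpoint, and since $T$ is acyclic these endpoints must collapse to a single vertex $v$, so the cycle lies in $K_v$.

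For the reverse direction, the key observation is that in a claw-free block graph $G$ every vertex lies in at most two blocks. Indeed, if $v$ lay in three blocks $B_1,B_2,B_3$, picking $u_i \in B_i\setminus\{v\}$ would yield a claw on $\{v,u_1,u_2,u_3\}$: the $u_i$'s cannot be pairwise adjacent, for any common block containing $v,u_i,u_j$ would have to coincide with both $B_i$ and $B_j$ (since two blocks meet in at most one vertex), contradicting $i \neq j$. I would then construct a tree $T$ whose vertex set consists of the blocks of $G$ together with a pendant vertex $\ell_v$ for each non-cut-vertex $v$ of $G$. Each vertex $v$ of $G$ contributes exactly one edge of $T$: if $v$ lies in two blocks $B,B'$, insert the edge $BB'$; if $v$ lies in the unique block $B$, insert the edge $B\ell_v$.

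It then remains to verify that $T$ is a tree and that $L(T)=G$. Connectivity of $T$ follows from connectivity of $G$ (any two blocks are linked by a chain of blocks sharing cut vertices). For acyclicity, a cycle in $T$ would correspond to a cyclic sequence of blocks $B_1,\ldots,B_m$ of $G$ with $B_i \cap B_{i+1} \neq \emptyset$; concatenating the cycles inside each $B_i$ through these shared cut vertices would produce a $2$-connected subgraph of $G$ strictly containing each $B_i$, contradicting the maximality of blocks. Finally, by construction the edges of $T$ are in bijection with the vertices of $G$, and two edges of $T$ share a block-vertex $B$ precisely when the corresponding vertices of $G$ both lie in $B$, i.e., are adjacent in $G$; two edges of $T$ cannot share a leaf $\ell_v$, since $\ell_v$ has degree one in $T$. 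Hence $L(T) = G$.

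I expect the main obstacle to be the acyclicity check for $T$, which forces one to translate a combinatorial cycle of blocks into a genuine violation of block-maximality in $G$. Everything else is a structural unwinding: the at-most-two-blocks-per-vertex property is exactly the content of claw-freeness, and once one has it the tree $T$ essentially writes itself as a refinement of the block-cut tree.
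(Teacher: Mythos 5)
Your proof is correct, but note that the paper does not prove this lemma at all: it is quoted verbatim from Harary (Theorem 8.5 of \emph{Graph Theory}), so any comparison is between your self-contained argument and a citation. What you give is the standard structural proof via the block--cut decomposition, and it holds together. In the forward direction, the claw-freeness argument (neighborhood of an edge $uw$ splits into the two cliques $K_u$, $K_w$, so three independent neighbors are impossible by pigeonhole) is exactly right; for the block statement, the one step worth tightening is the claim that the shared endpoints of a cycle in $L(T)$ ``collapse to a single vertex'': the clean way to say it is that the cycle induces a closed walk in $T$ traversing each edge at most once, whereas a nonempty closed walk in a tree must traverse some edge at least twice, forcing all the shared endpoints to coincide. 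Then every cycle of $G$ lies in a clique $K_v$, and since any two vertices of a $2$-connected block lie on a common cycle (and bridges are $K_2$'s), every block is complete; you should also record that $L(T)$ is connected, since the paper's definition of block graph includes connectivity. In the reverse direction, your key observation --- that in a block graph claw-freeness is equivalent to every vertex lying in at most two blocks --- is precisely the right reduction, and the tree you build (blocks as vertices, each cut vertex giving an edge between its two blocks, each non-cut vertex giving a pendant edge) is the standard contraction of the block--cut tree; its acyclicity is inherited from the block--cut tree, or, as you sketch, from the fact that a cyclic chain of blocks yields a closed walk through distinct cut vertices whose extracted cycle would meet two distinct blocks in an edge. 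The bijection edges-of-$T$ $\leftrightarrow$ vertices-of-$G$ and the adjacency check are correct (distinctness of the edges uses that two blocks meet in at most one vertex). In short: your route supplies an elementary proof of a fact the paper outsources to Harary, at the cost of invoking (or reproving) the standard facts that two blocks share at most one vertex and that the block--cut structure is a tree.
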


\begin{lem}\label{lem3.4}
Let $S$ denote a linearly independent set of $V$ for which $G(S)$ is a claw-free block graph. Then for each $S'$ in the $\mathcal I$-equivalence class of $S$ the graph $G(S')$ is a claw-free block graph.
\end{lem}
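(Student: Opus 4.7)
The plan is to induct on the length of an $\mathcal I_0$-sequence joining $S$ to $S'$. Since $\mathcal I$ is generated by $\mathcal I_0$, it suffices to prove: if $S$ is linearly independent with $G(S)$ a claw-free block graph and $S'=(S\setminus\{\beta\})\cup\{\tau_\alpha\beta\}$ for some $\alpha,\beta\in S$, then $G(S')$ is also a claw-free block graph. The case $\alpha=\beta$ or $B(\alpha,\beta)=0$ is trivial since then $\tau_\alpha\beta=\beta$ and $S'=S$. So assume $\alpha\neq\beta$ and $B(\alpha,\beta)=1$, and set $\beta':=\tau_\alpha\beta=\alpha+\beta$; note $S'$ is then automatically linearly independent because the change of coordinates $v_j\mapsto v_j+v_i$ is invertible.

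The key idea is to realize the $\mathcal I_0$-move as an edge swap in a tree. By Lemma~\ref{lem3.2}, $G(S)=L(T)$ for some tree $T$; identify each $v\in S$ with the corresponding edge $e_v$ of $T$. Since $B(\alpha,\beta)=1$, the edges $e_\alpha$ and $e_\beta$ share a vertex $u$ of $T$; write $e_\alpha=uw_\alpha$ and $e_\beta=uw_\beta$. Build a new graph $T'$ from $T$ by deleting $e_\beta$ and inserting the edge $e_{\beta'}:=w_\alpha w_\beta$. Deleting $e_\beta$ splits $T$ into two components containing $\{u,w_\alpha\}$ and $w_\beta$ respectively, so adding the new edge reconnects these into a tree $T'$ on the same vertex set.

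The main task is to verify $G(S')=L(T')$ under the identification extending the old one by $\beta'\leftrightarrow e_{\beta'}$. Adjacencies among $S\setminus\{\beta\}$ are untouched by the move, and the edges $e_v$ for $v\in S\setminus\{\beta\}$ are unchanged in $T'$, so these adjacencies match. For the neighborhood of $\beta'$, use $B(v,\beta')=B(v,\alpha)+B(v,\beta)$: for $v\in S\setminus\{\alpha,\beta\}$, $v\sim\beta'$ in $G(S')$ iff $v$ lies in the symmetric difference $N_{G(S)}(\alpha)\triangle N_{G(S)}(\beta)$. A short case analysis on whether $e_v$ contains $u$, $w_\alpha$, or $w_\beta$, using the fact that $T$ is acyclic (so in particular $w_\alpha w_\beta\notin E(T)$, ruling out $e_v=w_\alpha w_\beta$), shows this is equivalent to $e_v$ meeting $e_{\beta'}=w_\alpha w_\beta$ in $T'$. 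The adjacency $\beta'\sim\alpha$ follows from $B(\alpha,\beta')=1$, matching $e_\alpha\cap e_{\beta'}=\{w_\alpha\}$. With $G(S')=L(T')$ in hand, Lemma~\ref{lem3.2} concludes that $G(S')$ is a claw-free block graph. The main obstacle I anticipate is spotting the correct tree modification $T\mapsto T'$; once it is in hand the remainder reduces to the routine case analysis above.
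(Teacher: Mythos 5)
Your proposal is correct and follows essentially the same route as the paper: both invoke Lemma~\ref{lem3.2} to write $G(S)=L(T)$ for a tree $T$, realize the $\mathcal I_0$-move as deleting the edge $\beta$ and adding an edge between the two non-shared endpoints of $\alpha$ and $\beta$, verify that adjacency to $\tau_\alpha\beta$ corresponds to being adjacent to exactly one of $\alpha,\beta$, and then apply Lemma~\ref{lem3.2} again, propagating along the $\mathcal I_0$-chain. Your case analysis (including the acyclicity remark ruling out $e_v=w_\alpha w_\beta$) is sound and just makes explicit what the paper leaves to the reader.
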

\begin{proof}
Let $S'$ denote a subset of $V$ which $S$ is $\mathcal I_0$-related to. Let $\alpha,\beta\in S$ such that $S'$ is obtained from $S$ by changing $\beta$ to $\tau_\alpha \beta.$ If $B(\alpha,\beta)=0$ there is nothing to prove. Thus we assume $B(\alpha,\beta)=1.$ By Lemma~\ref{lem3.2} there exists a tree $T$ whose line graph is $G(S).$ Let $u$ denote the common vertex of the edges $\alpha$ and $\beta$ in $T.$ Let $v$ and $w$ denote the other vertices incident to $\alpha$ and $\beta$ in $T,$ respectively. Let $T'$ denote the tree obtained from $T$ by removing the edge $\beta$ and adding a new edge between $v$ and $w.$ We call the new edge $\tau_\alpha \beta.$ For each $\gamma\in S',$ $\gamma$ is adjacent to $\tau_\alpha \beta$ in $G(S')$ if and only if $\gamma$ is adjacent to exactly one of $\alpha$ and $\beta$ in $G(S).$ Therefore $G(S')$ is the line graph of $T'$ so $G(S')$ is a claw-free block graph by Lemma~\ref{lem3.2}. The result follows since $\mathcal I_0$ is symmetric and generates $\mathcal I.$
\end{proof}

\noindent{\it Proof of Theorem~\ref{thm3.1}.}
(sufficiency): Immediate from Lemma~\ref{lem3.3}.

\noindent (necessity): By Lemma~\ref{lem3.1} there exists $S'$ in the $\mathcal I$-equivalence class of $S$ for which $G(S')$ is a path. Since a path is a claw-free block graph and by Lemma~\ref{lem3.4} the graph $G(S)$ is a claw-free block graph. \hfill $\Box$

\begin{cor}\label{cor3.1}
Assume that $S$ is a linearly independent set of $V$ and no element of $S$ is in ${\rm rad}\hspace{0.5mm}V.$ Let the equivalence relation $\mathcal I$ be as in Definition~\ref{defn3.2}. Then the following {\rm (i)}--{\rm (iv)} are equivalent:
\begin{enumerate}
\item[{\rm (i)}] $Tv(S)$ is isomorphic to a symmetric group.

\item[{\rm (ii)}] $G(S)$ is a claw-free block graph.

\item[{\rm (iii)}] $G(S)$ is the line graph of a tree.

\item[{\rm (iv)}] There exists $S'$ in the $\mathcal I$-equivalence class of $S$ for which $G(S')$ is a path.
\end{enumerate}
Suppose {\rm(i)}--{\rm (iv)} hold. Let $n$ denote the cardinality of $S.$ Then $Tv(S)$ is isomorphic to the symmetric group on $n+1$ letters.
\end{cor}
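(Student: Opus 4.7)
The plan is to assemble the corollary directly from the lemmas and theorem already proved, since each of the equivalences has been isolated in exactly one previous result.

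First I would establish (i) $\Leftrightarrow$ (ii) by citing Theorem~\ref{thm3.1}, which applies because $S$ is linearly independent and avoids ${\rm rad}\hspace{0.5mm}V$. Next, (ii) $\Leftrightarrow$ (iii) follows immediately from Lemma~\ref{lem3.2}, which is a purely graph-theoretic statement requiring no hypotheses on $S.$ Finally, (i) $\Leftrightarrow$ (iv) is precisely the content of Lemma~\ref{lem3.1}, whose hypotheses match those of the corollary. Chaining these three biconditionals yields the equivalence of all four conditions.

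For the cardinality assertion, once (i)--(iv) hold, the graph $G(S)$ is a claw-free block graph by (ii), and its vertex set is $S,$ which has order $n.$ Lemma~\ref{lem3.3} then applies with no further hypotheses to give $Tv(S)\cong S_{n+1}.$

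There is essentially no obstacle here: the genuine work has already been discharged by Theorem~\ref{thm3.1} and Lemmas~\ref{lem3.1}, \ref{lem3.2}, \ref{lem3.3}. The only thing to verify is that the hypotheses on $S$ (linear independence and avoidance of the radical) are exactly what is needed for each cited result; Lemma~\ref{lem3.2} is universal, Lemma~\ref{lem3.3} needs no assumption on $S,$ while Theorem~\ref{thm3.1} and Lemma~\ref{lem3.1} both take precisely the standing hypotheses of the corollary, so the assembly is immediate.
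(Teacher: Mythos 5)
Your proposal is correct and matches the paper's own proof essentially verbatim: the paper also derives (i) $\Leftrightarrow$ (ii) from Theorem~\ref{thm3.1}, (i) $\Leftrightarrow$ (iv) from Lemma~\ref{lem3.1}, (ii) $\Leftrightarrow$ (iii) from Lemma~\ref{lem3.2}, and the final cardinality assertion from Lemma~\ref{lem3.3}. No issues.
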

\begin{proof}
(i) $\Leftrightarrow$ (ii): Immediate from Theorem~\ref{thm3.1}.

(i) $\Leftrightarrow$ (iv): Immediate from Lemma~\ref{lem3.1}.

(ii) $\Leftrightarrow$ (iii): Immediate from Lemma~\ref{lem3.2}.

The last assertion is immediate from Lemma~\ref{lem3.3}.
\end{proof}

\section{Comments}

Given Theorem \ref{thm3.1} it is natural to further study the linearly dependent sets $S$ of $V$ for which the equivalence holds. This section is devoted to a description of these linearly dependent sets.

In view of Lemma~\ref{lem3.3} it is enough to study the linearly dependent set $S$ of $V$ for which $G(S)$ is a claw-free block graph (equivalently, the line graph of a tree). Moreover, replacing $V$ by the subspace of $V$ spanned by $S$ if necessary, we may assume without loss of generality that $S$ spans $V.$


We now describe how to obtain such a set $S$. For convenience an edge of a tree incident to a leaf will be said to be a {\it pendant edge}. Assume that $V$ has zero radical. Let $I$ denote a basis of $V$ for which $G(I)$ is the line graph of a tree $T.$ Pick a vertex $u$ of $T.$ Since the radical of $V$ is zero there exists a unique $\beta\in V$ such that for each $\alpha\in I$,
\begin{equation}\label{e4.1}
B(\alpha,\beta)=
\left\{
\begin{array}{ll}
1 \qquad &\hbox{if $u$ is incident to $\alpha$ in $T$},\\
0 \qquad &\hbox{if $u$ is not incident to $\alpha$ in $T$.}
\end{array}
\right.
\tag{\dag}
\end{equation}
Let $S=I\cup\{\beta\},$ which is linearly dependent unless the dimension of $V$ is two and $u$ is a leaf of $T.$ Suppose that $S$ is linearly dependent. Let $\it \Upsilon$ denote the tree obtained from $T$ by adding a pendant edge incident to $u.$ We call the new edge $\beta.$ Then $G(S)$ is the line graph of $\it \Upsilon.$

At the end of this section we will see that any linearly dependent spanning set $S$ of $V$ for which $G(S)$ is the line graph of a tree can be obtained in the above way. To this end we establish two lemmas.

\begin{lem}\label{lem4.2}
Assume that $V$ has zero radical. Let $I$ denote a basis of $V$ for which $G(I)$ is connected. Then for any $k\geq 2$ mutually distinct vectors $\beta_1,\ldots,\beta_k\in V\setminus I,$ the graph  $G(I\cup\{\beta_1,\ldots,\beta_k\})$ is not the line graph of a tree.
\end{lem}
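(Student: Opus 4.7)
The plan is a contradiction argument based on Lemma~\ref{lem3.2}. Suppose $G(I \cup \{\beta_1, \ldots, \beta_k\}) = L(T')$ for some tree $T'$; thus each vector in $I \cup \{\beta_1, \ldots, \beta_k\}$ is identified with an edge of $T'$. Since $G(I)$ is the induced subgraph of $L(T')$ on $I$, and $G(I)$ is connected by hypothesis, the edges in $I$ form a subtree $T_I \subseteq T'$ whose edge set is $I$, so $G(I) = L(T_I)$ and every $\alpha \in I$ has its two endpoints in $V(T_I)$.

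Next I would analyze the shape of the new edges. I claim each $\beta_j$ is a pendant edge of $T'$ joining a vertex $u_j \in V(T_I)$ to a leaf $v_j \notin V(T_I)$. Indeed, if both endpoints of $\beta_j$ lay outside $V(T_I)$ then $B(\beta_j, \alpha) = 0$ for every $\alpha \in I$; since $I$ spans $V$ and ${\rm rad}\, V = 0$, this forces $\beta_j = 0$, which is impossible because $\beta_j$, as an edge of the connected tree $T'$ with at least two edges, cannot be isolated in $L(T')$. On the other hand $\beta_j$ cannot have both endpoints in $V(T_I)$, since together with the $T_I$-path connecting them it would create a cycle in $T'$. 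Finally, if the non-$T_I$ endpoint $v_j$ carried a second edge, that edge would either return to $V(T_I)$ (creating a cycle through $T_I$) or have both endpoints outside $V(T_I)$ (contradicting the first case), so $v_j$ must be a leaf.

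Having pinned down this shape, $B(\beta_j, \alpha) = 1$ if and only if $u_j$ is an endpoint of $\alpha$ in $T_I$. Because $I$ is a basis and ${\rm rad}\, V = 0$, there is a unique vector in $V$ with this prescribed pairing against $I$; denote it $\beta^{u_j}$. Thus $\beta_j = \beta^{u_j}$, and the distinctness of $\beta_1, \ldots, \beta_k$ forces the $u_j$ to be distinct vertices of $T_I$.

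The key step is to compute $B(\beta^{u_1}, \beta^{u_2})$ for two distinct $u_1, u_2$. For any edge $e = \{w, w'\}$ of $T_I$ a direct check shows $B(e, \alpha) = B(\beta^{w} + \beta^{w'}, \alpha)$ for every $\alpha \in I$, so nondegeneracy gives $\beta^{w} + \beta^{w'} = e$. Telescoping along the unique path $u_1 = w_0, w_1, \ldots, w_d = u_2$ in $T_I$ with edges $e_1, \ldots, e_d$, this yields $\beta^{u_1} + \beta^{u_2} = e_1 + \cdots + e_d$. Hence
$$
B(\beta^{u_1}, \beta^{u_2}) = \sum_{i=1}^{d} B(\beta^{u_1}, e_i) = \sum_{i=1}^{d}[\, u_1 \text{ is an endpoint of } e_i \,] = 1,
$$
since $u_1$ is incident only to the first path edge $e_1$. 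However $\beta_1, \beta_2$ are pendant edges at the distinct vertices $u_1, u_2$ going to the distinct leaves $v_1, v_2$, so they share no vertex in $T'$ and must satisfy $B(\beta_1, \beta_2) = 0$. This contradiction proves the lemma. The main obstacle is the telescoping identity $\beta^w + \beta^{w'} = e$ and its iterated use along the tree path; the remaining work is tree/line-graph bookkeeping combined with the nondegeneracy of $B$.
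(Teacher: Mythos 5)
Your proof is correct and follows essentially the same route as the paper's: identify the $\beta_j$ as pendant edges attached at distinct vertices of the subtree spanned by $I$, express $\beta_1+\beta_2$ as the sum of the edges on the tree path between those attachment points, and derive the contradiction $B(\beta_1,\beta_2)=1$ against their non-adjacency in the line graph. The only (cosmetic) difference is that you obtain the path identity by telescoping $\beta^{w}+\beta^{w'}=e$ over the path edges, whereas the paper verifies $B(\alpha,\beta+\gamma)=B(\alpha,\alpha_1+\cdots+\alpha_k)$ for all $\alpha\in I$ in one step and invokes nondegeneracy.
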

\begin{proof}
Proceed by contradiction. Suppose there exist distinct $\beta,\gamma\in V\setminus I$ such that $G(I\cup\{\beta,\gamma\})$ is the line graph of a tree $\it \Upsilon.$ Let $T$ denote the subgraph of $\it \Upsilon$ induced by all $\alpha\in I.$ Since $I$ spans $V$ each of $\beta$ and $\gamma$ is a pendant edge of $\it \Upsilon.$ Let $u$ and $v$ denote the two vertices of $T$ incident to $\beta$ and $\gamma$, respectively. Since the radical of $V$ is zero $u$ and $v$ are distinct. Let $\alpha_1,\ldots,\alpha_k$ denote the edges in the path joining $u$ and $v.$ The incidence relation on $\it \Upsilon$ implies that
$
B(\alpha,\beta+\gamma)=B(\alpha,\alpha_1+\cdots+\alpha_k)
$
for each $\alpha\in I.$
Therefore $\beta+\gamma=\alpha_1+\cdots+\alpha_k.$ Using this we deduce $B(\beta,\gamma)=1,$ a contradiction.
\end{proof}


\begin{lem}\label{lem4.1}
Assume that $I$ is a basis of $V$ for which $G(I)$ is the line graph of a tree $T.$ Then the following {\rm (i)}--{\rm (iv)} are equivalent:
\begin{enumerate}
\item[{\rm (i)}] The radical of $V$ is zero.

\item[{\rm (ii)}] The dimension of $V$ is even.

\item[{\rm (iii)}] For some vertex $u$ in $T$ there exists $\beta\in V$ such that {\rm (\ref{e4.1})} holds for each $\alpha\in I.$

\item[{\rm (iv)}] For each vertex $u$ in $T$ there exists a unique $\beta\in V$ such that {\rm (\ref{e4.1})} holds for each $\alpha\in I.$
\end{enumerate}
\end{lem}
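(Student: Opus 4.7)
My plan is to encode the whole problem in terms of the vertex--edge incidence matrix $N$ of the tree $T$ over $\mathbb F_2$, with rows indexed by $V(T)$ and columns indexed by the edges $\alpha_1,\ldots,\alpha_n$ of $T$ (that is, by the elements of $I$). The hypothesis $G(I)=L(T)$ then gives the identity $M=N^T N$, where $M:=(B(\alpha_i,\alpha_j))_{i,j}$ is the Gram matrix of $B$ with respect to $I$: for $i\neq j$ the entry $(N^T N)_{ij}$ counts the common endpoints of the edges $\alpha_i,\alpha_j$ mod $2$, matching $B(\alpha_i,\alpha_j)$, while the diagonal vanishes on both sides since every edge has $2\equiv 0$ endpoints. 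Two kernel identifications then do the real work: $\ker N^T=\{0,\mathbf 1\}$ because $T$ is connected (so $N^T x=0$ forces $x$ constant on $V(T)$), and $\ker N=0$ because a tree has no nonempty even subgraph. By rank--nullity the image of $N$ is the parity-zero subspace $\{y\in\mathbb F_2^{V(T)}:\sum_v y_v=0\}$.

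I would then dispatch the equivalences in the order (i)$\Leftrightarrow$(ii), then (iii)$\Rightarrow$(ii)$\Rightarrow$(iv), with (iv)$\Rightarrow$(iii) immediate. Identifying the radical of $V$ with $\ker M$ via the basis $I$, the identity $M=N^T N$ combined with $\ker N=0$ yields $\ker M\neq 0$ iff $Nc=\mathbf 1$ is solvable, iff the parity obstruction $\sum_v \mathbf 1_v=n+1$ vanishes, iff $n$ is odd. Hence $M$ is invertible exactly when $n$ is even, giving (i)$\Leftrightarrow$(ii).

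For (iii) and (iv), fix a vertex $u$ of $T$, write $\beta=\sum_i c_i\alpha_i$, and set $e_u:=N^T\delta_u\in\mathbb F_2^n$, the indicator of edges at $u$. Then (\ref{e4.1}) reads $Mc=e_u$, which by $M=N^T N$ is equivalent to $Nc-\delta_u\in\ker N^T=\{0,\mathbf 1\}$. The option $Nc=\delta_u$ fails the parity test since $\sum_v(\delta_u)_v=1$; the option $Nc=\delta_u+\mathbf 1$ has sum $n+2\equiv n$ and is consistent exactly when $n$ is even, in which case $c$ is automatically unique because $M$ is invertible. This proves both (iii)$\Rightarrow$(ii) and (ii)$\Rightarrow$(iv).

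The only real obstacle is the clean bookkeeping to set up $M=N^T N$ and to identify the two kernels of $N$ and $N^T$ in terms of the combinatorics of $T$; once that dictionary is in place every remaining step is a one-line parity count on the tree.
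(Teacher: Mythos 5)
Your proof is correct and is essentially the paper's own argument recast in matrix form: the factorization $M=N^{T}N$ is precisely the paper's factorization $\theta=\lambda\circ\mu$ of the Gram form through the vertex space of $T$, with the same identification of $\ker N^{T}$ as $\{0,\mathbf 1\}$ (the paper's $\ker\lambda=\{0,w\}$) and of the image of $N$ as the even-weight vectors (the paper's ${\rm Im}\,\mu$). The only cosmetic difference is the order in which the equivalences are dispatched: you prove (i)$\Leftrightarrow$(ii) directly via $\ker N=0$, whereas the paper goes through (i)$\Leftrightarrow$(iv) using the surjectivity of $\lambda$.
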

\begin{proof}
Let $V^\ast$ denote the dual space of $V.$ Define a linear map $\theta:V\to V^\ast$ by
\begin{eqnarray*}
\theta(\alpha)\beta=B(\alpha,\beta) \qquad \hbox{for all $\alpha,\beta\in V.$}
\end{eqnarray*}
The kernel of $\theta$ is ${\rm rad}\hspace{0.5mm}V.$ Therefore (i) if and only if (i$'$) the map $\theta$ is a bijection. We show that (i$'$) and (ii)--(iv) are equivalent. Condition (i$'$) immediately implies (iv). To see that (iv) implies (i$'$) we let $U$ denote the vertex space of $T$ over $\mathbb F_2$ and define a linear map $\lambda:U\to V^\ast$ by for all vertices $u$ of $T$ and for all $\alpha\in I,$
\begin{eqnarray*}
\lambda(u)\alpha=\left\{
\begin{array}{ll}
1 \qquad &\hbox{if $u$ is incident to $\alpha$ in $T$,}\\
0 \qquad &\hbox{if $u$ is not incident to $\alpha$ in $T$.}\\
\end{array}
\right.
\end{eqnarray*}
The kernel of $\lambda$ is $\{0,w\},$ where $w$ is the sum of all vertices of $T$. By dimension theorem $\lambda$ is surjective. Therefore (iv) implies that $\theta$ is surjective and so is bijective.

To see the equivalence of (ii)--(iv) we define a linear map $\mu:V\to U$ by for all $\alpha\in I,$
$$
\mu(\alpha)=u+v,
$$
where $u$ and $v$ are the two distinct vertices incident to $\alpha$ in $T.$ Observe that $\theta=\lambda\circ \mu$ and that the image of $\mu,$ denoted by ${\rm Im}\hspace{0.5mm}\mu,$ consists of all $v\in U$ each of which is equal to the sum of an even number of vertices in $T$. Therefore (ii) if and only if $w+u\in {\rm Im}\hspace{0.5mm}\mu$ for some (resp. each) vertex $u$ of $T$ if and only if (iii) (resp. (iv)). Here $w$ is the nonzero vector in the kernel of $\lambda.$
\end{proof}

\begin{prop}
Let $S$ denote a linearly dependent spanning set of $V.$ Assume that $G(S)$ is the line graph of a tree $\it \Upsilon.$ Then the following {\rm (i)}--{\rm (iii)} hold:
\begin{enumerate}
\item[{\rm (i)}] The radical of $V$ is zero.

\item[{\rm (ii)}] The dimension of $V$ is even.

\item[{\rm (iii)}] For each pendant edge $\beta$ of $\it \Upsilon$ the set $S\setminus \{\beta\}$ is a basis of $V.$
\end{enumerate}
\end{prop}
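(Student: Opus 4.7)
The plan is to prove (iii) first, and then deduce (i) and (ii) at once by invoking Lemma~\ref{lem4.1}. Fix a pendant edge $\beta$ of $\Upsilon$; let $\ell$ be its leaf endpoint and $u$ its other endpoint, and set $I:=S\setminus\{\beta\}$. The main work will be to show $I$ is a basis of $V$.

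The central tool will be the following parity observation about linear dependencies in $S$: whenever $S_0\subseteq S$ satisfies $\sum_{\alpha\in S_0}\alpha=0$, either $S_0=\emptyset$, or every vertex of $\Upsilon$ meets an odd number of edges of $S_0$, and in particular every pendant edge of $\Upsilon$ (including $\beta$) lies in $S_0$. To verify this I would pair the dependency with an arbitrary edge $\delta=xy$ of $\Upsilon$: since $G(S)$ is the line graph of $\Upsilon$, a short count gives
\begin{equation*}
0 \;=\; B\!\left(\delta,\,\sum_{\alpha\in S_0}\alpha\right) \;\equiv\; d_{S_0}(x)+d_{S_0}(y) \pmod 2,
\end{equation*}
where $d_{S_0}(v)$ is the number of edges of $S_0$ incident to $v$ in $\Upsilon$. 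Connectedness of $\Upsilon$ then forces $d_{S_0}$ to have constant parity on the vertices. Even parity would exhibit $S_0$ as a nonempty subgraph of the tree $\Upsilon$ in which every vertex has even degree, which is impossible; so the parity is odd and every leaf of $\Upsilon$ is met by its incident pendant edge.

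Given this, both halves of (iii) become short. Since $S$ is linearly dependent, a nonempty dependency $S_0\subseteq S$ exists; by the parity observation $\beta\in S_0$, hence $\beta=\sum_{\alpha\in S_0\setminus\{\beta\}}\alpha$ lies in $\langle I\rangle$, and $I$ spans $V$. Conversely, any dependency $S_0\subseteq I$ would likewise force $\beta\in S_0\subseteq I$, contradicting $\beta\notin I$; so $I$ is linearly independent. Thus $I$ is a basis of $V$.

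Finally, for (i) and (ii): $G(I)$ is the line graph of the tree $T$ obtained from $\Upsilon$ by deleting the edge $\beta$ together with its leaf $\ell$, and for each $\alpha\in I$ we have $B(\alpha,\beta)=1$ iff $\alpha$ shares an endpoint with $\beta$ in $\Upsilon$, iff $\alpha$ is incident to $u$ in $T$. Thus $\beta$ and $u$ witness condition~(\ref{e4.1}) for the basis $I$ and tree $T$, and Lemma~\ref{lem4.1} delivers ${\rm rad}\hspace{0.5mm}V=0$ and $\dim V$ even. The one step requiring real thought is the parity lemma, in particular ruling out the even-parity case via acyclicity of $\Upsilon$; everything after it is immediate bookkeeping, and the application of Lemma~\ref{lem4.1}.
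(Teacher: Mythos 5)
Your proof is correct, but it takes a genuinely different route from the paper's. The paper chooses a maximal linearly independent subset $J\subseteq S$ with $G(J)$ connected, argues that $J$ is a basis, invokes Lemma~\ref{lem4.2} to conclude that $S\setminus J$ is a single vector $\alpha$, and then applies Lemma~\ref{lem4.1} twice --- once to $V$ and once to the odd-dimensional subspace $W$ spanned by $J\setminus\{\beta\}$ --- to show $\alpha\notin W$ and hence that $S\setminus\{\beta\}$ is independent. You instead isolate one parity fact: any $S_0\subseteq S$ with $\sum_{\alpha\in S_0}\alpha=0$ satisfies $d_{S_0}(x)\equiv d_{S_0}(y)\pmod 2$ across every edge $xy$ of $\Upsilon$ (the computation is right: distinct edges of a simple graph share at most one endpoint, and $B(\delta,\delta)=0$ absorbs the diagonal term), so connectedness of $\Upsilon$ makes the parity constant, and the even case is impossible because a nonempty subforest of a tree has a vertex of degree one. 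Hence every nonempty dependency contains every pendant edge, which gives (iii) for all pendant edges simultaneously, and (i), (ii) then follow from a single application of Lemma~\ref{lem4.1}, (iii)$\Rightarrow$(i),(ii), with the witness $u,\beta$. Your version is more self-contained --- it bypasses Lemma~\ref{lem4.2} altogether, uses Lemma~\ref{lem4.1} only once, and yields the extra information that the unique nonempty dependency in $S$ is exactly an odd-degree edge subset of $\Upsilon$ --- while the paper's route buys economy by reusing the two lemmas it has already established for the surrounding discussion.
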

\begin{proof}
Consider the set consisting of the linearly independent subsets $I$ of $S$ for which $G(I)$ is connected. From this set we choose a maximal element $J$ under inclusion. The maximality of $J$ forces that each $\alpha\in S\setminus J$ is in the subspace of $V$ spanned by $J.$ Therefore $J$ is a basis of $V.$ Applying Lemma~\ref{lem4.1} to $I=J,$ (i) and (ii) follow. To prove (iii) we fix a pendant edge $\beta$ of $\it \Upsilon$ and show that $S\setminus \{\beta\}$ is linearly independent. Applying Lemma~\ref{lem4.2} to $I=J$ it follows that $S\setminus J$ contains exactly one element, denoted by $\alpha.$ If $\alpha=\beta$ there is nothing to prove. Thus we assume  $\alpha\neq\beta.$ Let $W$ denote the subspace of $V$ spanned by $J\setminus\{\beta\},$ which has odd dimension. Applying Lemma~\ref{lem4.1} to $I'=J\setminus\{\beta\}$ and $V'=W$ we find that $\alpha\not\in W.$ Therefore $S\setminus\{\beta\}$ is linearly independent.
\end{proof}

\section{Acknowledgements}
The author would like to thank the anonymous referee for valuable suggestions.


\bigskip

\noindent Hau-wen Huang
\hfil\break Department of Applied Mathematics
\hfil\break National Chiao Tung University
\hfil\break 1001 Ta Hsueh Road
\hfil\break Hsinchu, Taiwan, 30050.
\hfil\break Email:  {\tt poker80.am94g@nctu.edu.tw}
\medskip


\begin{thebibliography}{99}

\bibitem{brohum:86-1} R. Brown, S. P. Humphries.
\newblock {Orbits under symplectic transvections I.}
\newblock{\em Proceedings of the London Mathematical Society} {\bf 52} (1986) {517--531}.




\bibitem{brohum:86-2} R. Brown, S. P. Humphries.
\newblock {Orbits under symplectic transvections II: the case $K=\mathbb{F}_2.$}
\newblock{\em Proceedings of the London Mathematical Society} {\bf 52} (1986) {532--556}.












\bibitem{huris:85} S. P. Humphries.
\newblock { Graphs and Nielsen transformations of symmetric, orthogonal and symplectic
groups.}
\newblock{\em Quarterly Journal of Mathematics, Oxford} {\bf 36} (1985) {297--313}.







\bibitem{ha:72} F. Harary.
\newblock {\em Graph Theory.}
\newblock {Addison-Wesley, Massachusetts}, 1969.


\end{thebibliography}
\end{document}